\documentclass[12pt]{amsart}
\usepackage{SSdefn}
\usepackage[lite,nobysame,alphabetic]{amsrefs}
\usepackage{verbatim}

\title{An equivariant Hilbert basis theorem}

\author{Daniel Erman}
\address{Department of Mathematics, University of Wisconsin, Madison, WI}
\email{\href{mailto:derman@math.wisc.edu}{derman@math.wisc.edu}}
\urladdr{\url{http://math.wisc.edu/~derman/}}

\author{Steven V Sam}
\address{Department of Mathematics, University of Wisconsin, Madison, WI}
\curraddr{Department of Mathematics, University of California, San Diego, CA}
\email{\href{mailto:ssam@ucsd.edu}{ssam@ucsd.edu}}
\urladdr{\url{http://math.ucsd.edu/~ssam/}}

\author{Andrew Snowden}
\address{Department of Mathematics, University of Michigan, Ann Arbor, MI}
\email{\href{mailto:asnowden@umich.edu}{asnowden@umich.edu}}
\urladdr{\url{http://www-personal.umich.edu/~asnowden/}}

\thanks{DE was partially supported by NSF DMS-1302057.}
\thanks{SS was partially supported by NSF DMS-1500069 and DMS-1651327 and a Sloan Fellowship.}
\thanks{AS was partially supported by NSF DMS-1303082 and DMS-1453893 and a Sloan Fellowship.}

\date{September 5, 2019}

\subjclass[2010]{%
14L30%   	Group actions on varieties or schemes (quotients)
}

\begin{document}

\begin{abstract}
We prove a version of the Hilbert basis theorem in the setting of equivariant algebraic geometry.
\end{abstract}

\maketitle

\section{Introduction}

A topological space $X$ equipped with an action of a group $G$ is {\bf $G$-noetherian} if every descending chain of $G$-stable closed subsets of $X$ stabilizes. If $X$ is a scheme equipped with an action of $G$, one says $X$ is {\bf topologically $G$-noetherian} if the topological space $\vert X \vert$ is $G$-noetherian. The notion of $G$-noetherianity has received much attention in recent years due to its connection to representation stability. We recall a few examples:
\begin{itemize}
\item Cohen \cite{cohen1,cohen2} proved that the scheme $X=\varprojlim_d \bA^d$ (or even $X^n$, for any $n \ge 0$) is topologically $S_{\infty}$-noetherian\footnote{In fact, Cohen's result is stronger and applies at the level of rings.}, and used this to prove certain results in universal algebra. This result was rediscovered some decades later by Aschenbrenner, Hillar, and Sullivant \cite{aschenbrenner,hillar}, with applications to combinatorial algebra and algebraic statistics.
\item Draisma--Eggermont \cite{draisma-eggermont} considered the scheme $X$ of $\infty \times \infty$ matrices, and showed that $X^n$ is topologically $G$-noetherian for any $n \ge 0$, with $G=\GL_{\infty} \times \GL_{\infty}$. This result was crucial to their study the equations of so-called Pl\"ucker varieties. See \cite{draisma-kuttler} for related results and applications.
\item Following work of Eggermont \cite{eggermont} and Derksen--Eggermont--Snowden \cite{DES}, Draisma \cite{draisma} proved that if $V$ is any polynomial representation of $\GL_{\infty}$ then $V^*$ is topologically $\GL_{\infty}$-noetherian. This result has been applied by the present authors \cite{ess} to prove a general finiteness result in commutative algebra.
\end{itemize}
Unfortunately, there are few general tools for proving that a space is equivariantly noetherian. The purpose of this paper is to establish one such tool: we view our main theorem as a version of the classical Hilbert basis theorem in the setting of equivariant noetherianity.

Recall that the classical Hilbert basis theorem states that if $A$ is a noetherian ring then the polynomial ring $A[x]$ is again noetherian. This can be recast in the language of schemes as follows: if $S$ is a noetherian scheme and $X \to S$ is a finite type map of schemes then $X$ is noetherian. Our main theorem is the following equivariant version of this statement.

\begin{theorem} \label{thm:Ghilbert}
Let $X \to S$ be a $G$-equivariant finite type map of schemes. Suppose $S$ is topologically $G$-noetherian. Then $X$ is topologically $G$-noetherian.
\end{theorem}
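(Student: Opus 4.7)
The plan is to proceed by Noetherian induction on $S$, using its $G$-noetherian structure, combined with a fiberwise analysis. Suppose for contradiction that the conclusion fails. By the topological $G$-noetherianity of $S$, we may choose a minimal $G$-stable closed subset $S_0 \subseteq S$ for which $X \times_S S_0$ fails to be topologically $G$-noetherian. Replacing $S$ by $S_0$, we may assume that $X|_T := \pi^{-1}(T)$ is topologically $G$-noetherian for every proper $G$-stable closed $T \subsetneq S$. Fix a strictly descending chain $Y_1 \supsetneq Y_2 \supsetneq \cdots$ of $G$-stable closed subsets of $X$ and set $Y_\infty := \bigcap_i Y_i$; we aim to derive a contradiction.

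First, note that the closed images $\overline{\pi(Y_i)} \subseteq S$ form a descending chain of $G$-stable closed subsets, hence stabilize at some $Z \subseteq S$. If $Z \subsetneq S$, then $Y_i \subseteq X|_Z$ for large $i$, and the chain stabilizes by the inductive hypothesis---contradiction. So we may assume $\pi(Y_i)$ is dense in $S$ for all $i$.

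The crux is then to exhibit an integer $N$ and a proper $G$-stable closed subset $T \subsetneq S$ with $\pi(Y_N \setminus Y_\infty) \subseteq T$. Granted this, $Y_i$ agrees with $Y_\infty$ over $X \setminus X|_T$ for $i \geq N$, while its restriction to $X|_T$ stabilizes by the inductive hypothesis; combining these gives stabilization of $Y_i$, the desired contradiction. To produce $T$, we exploit the finite-type hypothesis: each fiber $X_s$ is classically noetherian as a finite-type scheme over $k(s)$, so $Y_i \cap X_s$ stabilizes fiberwise. Using that any topologically $G$-noetherian space has only finitely many $G$-orbits of generic points of its irreducible components (which should follow from an ascending-chain argument applied to the $G$-stable radical ideals obtained by intersecting $G$-orbits of minimal primes), we obtain a uniform stabilization index $N$ over a fixed finite set of $G$-generic points. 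A constructibility / spreading-out argument should then show that the locus where stabilization has not been achieved by stage $N$ is contained in a proper $G$-stable closed subset of $S$.

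The principal obstacle is this final propagation step. Because $S$ is only topologically $G$-noetherian and not necessarily locally noetherian in the classical sense, Chevalley's theorem on constructibility of images of finite-type morphisms is not directly available at the level of $S$. To bypass this, the plan is to work on affine opens of $S$---where the restriction of $\pi$ is a finite-type morphism of affine schemes for which standard constructibility arguments apply---and then $G$-saturate, carefully aggregating the local stabilization data into a single proper $G$-stable closed subset $T$. Handling this interaction between local constructibility, the $G$-structure on $S$, and the Noetherian induction is the technical heart of the proof.
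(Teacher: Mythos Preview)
Your outer framework---noetherian induction on $S$, reducing to the case where every $\overline{\pi(Y_i)}=S$---matches the paper exactly. The gap is precisely where you locate it: the propagation step is not a technicality but the entire content of the theorem, and the mechanism you propose does not work.

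Two specific problems. First, passing to affine opens of $S$ buys nothing: those opens are still not locally noetherian, so Chevalley's constructibility theorem is still unavailable there. You need a different way to import finite-type-over-a-field behaviour. Second, and more seriously, your argument tries to control the entire chain $(Y_i)$ at once via $Y_\infty$. But $Y_\infty$ is an infinite intersection and need not be of finite presentation over $S$, so it cannot be spread out or descended; and even if you work with the individual $Y_i$, constructibility would give for each $i\ge N$ a proper $G$-closed $T_i\subset S$ over whose complement $Y_N$ and $Y_i$ agree, with no reason for $\bigcup_i T_i$ to be proper.

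The paper circumvents both issues with two ideas you do not have. For the first, it uses \emph{descent to a noetherian base}: after arranging $X\to S$ to be flat of finite presentation (generic flatness holds over any reduced base), one writes $X=X'\times_{S'}S$ with $S'$ noetherian, applies classical constructibility and stratification results on $S'$, and pulls back. For the second, it never confronts an infinite chain. Instead it introduces a well-ordered fiberwise invariant $\delta(X/S)\in\sD$ (the sorted tuple of dimensions of geometric irreducible components of a fiber), proves that for any \emph{single} proper $G$-closed $Y\subsetneq X$ there is a nonempty $G$-open $U\subset S$ with $Y_U\to U$ flat, finitely presented, $\delta$-constant, and $\delta(Y_U)<\delta(X_U)$, and then runs a second induction on $\delta$. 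The double induction (outer on $S$, inner on $\delta$) replaces your attempted uniform-stabilization argument.
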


We emphasize that there are no finiteness assumptions in the above theorem except for those stated. Indeed, the theorem is most interesting for infinite dimensional schemes like those mentioned above. We note the following useful corollary:

\begin{corollary} \label{cor:Ghilbert}
Let $X$ be a scheme equipped with an action of a group $G$ and a commuting algebraic action of a finite type algebraic group $H$. Suppose that $X$ is topologically $G \times H$ noetherian. Then it is also topologically $G$-noetherian.
\end{corollary}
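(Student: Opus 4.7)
The plan is to reduce Corollary~\ref{cor:Ghilbert} directly to Theorem~\ref{thm:Ghilbert} by building an auxiliary $G\times H$-scheme whose $G\times H$-stable closed subsets correspond to the $G$-stable closed subsets of $X$.

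Specifically, I would set $Y = H \times X$ and endow it with the $G\times H$-action $(g,h)\cdot(h',x) = (hh',\, g\cdot x)$, so that $H$ acts by left translation on the first factor and $G$ acts only on the second. Then I would consider the action map $\mu\colon Y \to X$, $\mu(h',x) = h'\cdot x$. Because the $G$- and $H$-actions on $X$ commute, a direct computation should show that $\mu$ is $G\times H$-equivariant. It is also of finite type: the automorphism $(h',x)\mapsto(h',(h')^{-1}x)$ of $Y$ intertwines $\mu$ with the projection $H\times X\to X$, which is of finite type since $H$ is. Applying Theorem~\ref{thm:Ghilbert} to $\mu$ with the group $G\times H$ (using that $X$ is $G\times H$-noetherian by hypothesis) then yields that $Y$ is $G\times H$-noetherian.

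To conclude $G$-noetherianity of $X$, I would translate chains back. Since $H$ acts transitively on itself by translation, every $H$-stable closed subset of $Y = H\times X$ has the form $H\times Z$ for a unique closed $Z\subseteq X$; moreover, $H\times Z$ is $G$-stable if and only if $Z$ is. The assignment $Z \mapsto H\times Z$ therefore gives an inclusion-preserving injection from $G$-stable closed subsets of $X$ into $G\times H$-stable closed subsets of $Y$, so any descending chain of the former produces a descending chain of the latter, which stabilizes. The only real point of care is setting up the $G\times H$-action on $Y$ so that both $\mu$ is equivariant and the $H$-stable closed subsets of $Y$ are of the trivial form $H\times Z$; once those are in place, the result is essentially formal.
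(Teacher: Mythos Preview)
Your proposal is correct and matches the paper's proof essentially verbatim: the paper also takes $Y=H\times X$ with the $G\times H$-action $(g,h)\cdot(h',x)=(hh',gx)$, applies Theorem~\ref{thm:Ghilbert} to the action map $H\times X\to X$, and then observes that $Z\mapsto H\times Z$ embeds $G$-stable chains in $X$ into $G\times H$-stable chains in $Y$. Your additional remarks (the automorphism witnessing finite-typeness of $\mu$, and the form of $H$-stable closed subsets of $Y$) merely supply details the paper leaves implicit.
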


\begin{example}
Let $X$ be a scheme that is topologically $G$-noetherian. Then any finite rank equivariant vector bundle over $X$ is also $G$-noetherian. For example, if $G$ is one of $\bO_{\infty}$, $\Sp_{\infty}$, or $\GL_{\infty}$, and $V$ is a finite length algebraic representation of $G$ (in the sense of \cite{infrank}) then $X=\Gr_r(V^*)$ is $G$-noetherian by \cite{eggsnow} (extending the main result of \cite{draisma}), and so any finite sum of finite tensor products of the rank $r$ tautological bundle and its dual is also $\bG$-noetherian.
\end{example}

\subsection{Overview of proof}

Let notation be as in Theorem~\ref{thm:Ghilbert}. The idea of the proof is as follows. First, we can proceed by noetherian induction on $S$, that is, we can assume that for every $G$-stable closed subset $S'$ of $S$ such that $S' \ne S$, the space $X_{S'}$ is topologically $G$-noetherian. This allows us to freely pass to open subsets of $S$. Second, finite type schemes over an arbitrary base behave similarly (at least in the ways that we care about) to finite type schemes over a field, assuming we are always allowed to replace the base with an open subset. Combining these two observations, we can in effect pretend that $S$ is the spectrum of a field, and then the result is obvious. The technical details of the actual proof are more complicated, but this is at least the intuition.

\subsection{Application}

We mention one (now defunct) application of our theorem. In \cite{ess}, we used Draisma's theorem mentioned above to prove a vast generalization of Stillman's conjecture: we showed that any invariant of ideals satisfying certain natural conditions is ``degreewise bounded'' (Stillman's conjecture being the case where the invariant is projective dimension). A preliminary version of \cite{ess}, written prior to Draisma's theorem, proved that Draisma's theorem was in fact equivalent to our generalization of Stillman's conjecture. Our proof that ``generalized Stillman'' implies Draisma's theorem required Corollary~\ref{cor:Ghilbert}, which is what propelled us to prove Theorem~\ref{thm:Ghilbert} in the first place.

\subsection*{Acknowledgments}

We thank Bhargav Bhatt for helpful conversations.

\section{Preliminaries from topology}

We omit proofs of the following, which are all standard exercises in topology.

\begin{proposition} \label{prop:dense-inverse}
Let $f \colon X \to S$ be an open map of topological spaces and let $U$ be a dense open subset of $S$. Then $f^{-1}(U)$ is a dense open subset of $X$.
\end{proposition}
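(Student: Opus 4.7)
The plan is to verify the two conclusions --- that $f^{-1}(U)$ is open and that it is dense in $X$ --- separately, both by direct unwinding of definitions.

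For openness, I would simply invoke continuity of $f$ (which is part of what is meant here by a ``map of topological spaces''): the preimage of the open set $U \subseteq S$ under a continuous map is open in $X$. The hypothesis that $f$ is an open map is not needed for this step.

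For density, I would use the standard criterion that a subset is dense iff it meets every nonempty open subset. So let $V \subseteq X$ be an arbitrary nonempty open set; the task is to produce a point of $V \cap f^{-1}(U)$. Here is where both remaining hypotheses enter. Since $f$ is open, $f(V)$ is an open subset of $S$; since $V$ is nonempty, so is $f(V)$. Density of $U$ in $S$ then gives a point $s \in f(V) \cap U$. Choose any $v \in V$ with $f(v) = s$; by construction $f(v) \in U$, so $v \in V \cap f^{-1}(U)$, as desired.

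There is no substantive obstacle: the proposition is essentially a direct consequence of the definitions of ``open map'' and ``dense subset,'' and the entire argument should fit in a few lines. The only point worth flagging is the (harmless) convention that ``map'' carries continuity, which is used to get openness of $f^{-1}(U)$; the openness hypothesis on $f$ is used only for the density half.
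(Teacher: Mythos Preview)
Your proof is correct and is essentially the same as the paper's: both hinge on the observation that for a nonempty open $V \subseteq X$, the image $f(V)$ is a nonempty open subset of $S$ and hence must meet the dense set $U$. The only cosmetic difference is that the paper phrases the density argument by contradiction (taking $V$ to be the complement of the closure of $f^{-1}(U)$), whereas you argue directly.
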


% \begin{proof}
% Suppose that $f^{-1}(U)$ is not dense in $X$, so that its closure $Z$ is a proper closed subset of $X$. Let $V$ be the complement of $Z$. This is an open subset of $X$ that does not meet $f^{-1}(U)$. It follows that the open set $f(V)$ is disjoint from the dense open set $U$, which is a contradiction.
% \end{proof}

\begin{proposition} \label{prop:dense-sub}
Let $X$ be a topological space, let $Z$ be a proper closed set, and let $U$ be a dense open subset of $X$. Then $Z \cap U$ is a proper closed subset of $U$.
\end{proposition}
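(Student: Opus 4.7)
The plan is simply to unpack the definitions. We must show two things: that $Z \cap U$ is closed in $U$, and that $Z \cap U \neq U$. The first is immediate since $Z \cap U$ is the intersection of the closed set $Z$ with $U$, hence closed in the subspace topology on $U$.

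For the second, I would argue by contradiction (or directly) using density. Let $V = X \setminus Z$, which is open in $X$ and nonempty because $Z$ is a proper closed subset. Since $U$ is dense in $X$, it meets every nonempty open set, so $U \cap V \neq \emptyset$. Any point in $U \cap V$ lies in $U$ but not in $Z$, so $U \not\subseteq Z$, i.e., $Z \cap U$ is properly contained in $U$.

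The only subtle point — if one can call it that — is making sure the density hypothesis is applied correctly: density of $U$ in $X$ means $U$ intersects every nonempty open subset of $X$, which is exactly what we need to conclude that $U$ is not swallowed by the proper closed set $Z$. There is no real obstacle; the statement is essentially the observation that a dense set cannot be contained in a proper closed set.
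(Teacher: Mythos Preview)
Your proof is correct and follows essentially the same approach as the paper's: both arguments amount to observing that a dense set cannot be contained in a proper closed set. The paper phrases this via closures (if $Z \cap U = U$ then $\overline{U} \subseteq Z$, contradicting $\overline{U}=X$), while you phrase it via the equivalent characterization that $U$ meets every nonempty open set, in particular $X \setminus Z$.
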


% \begin{proof}
% Suppose $Z \cap U=U$. The closure of $Z \cap U$ is then $X$. On the other hand, the closure of $Z \cap U$ is contained in $Z$. This is a contradiction, so $Z \cap U \ne U$.
% \end{proof}

\begin{proposition} \label{prop:noeth-pieces}
Let $X$ be a topological space, and let $X_1, \ldots, X_n$ be subspaces whose union is $X$. Suppose that each $X_i$ is noetherian. Then $X$ is noetherian.
\end{proposition}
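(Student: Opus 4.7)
The plan is to prove this by showing every descending chain of closed subsets of $X$ stabilizes. I would start with a descending chain
\[ Y_1 \supseteq Y_2 \supseteq Y_3 \supseteq \cdots \]
of closed subsets of $X$, and then for each index $i \in \{1,\ldots,n\}$ look at the induced chain obtained by intersecting with $X_i$.

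The key observation is that for each fixed $i$, the sets $Y_j \cap X_i$ are closed in $X_i$ with respect to the subspace topology, and they form a descending chain in $X_i$. Since $X_i$ is noetherian by hypothesis, there exists some $N_i$ such that $Y_j \cap X_i = Y_{N_i} \cap X_i$ for all $j \ge N_i$. Setting $N = \max(N_1, \ldots, N_n)$, which exists because the collection is finite, we get $Y_j \cap X_i = Y_N \cap X_i$ for all $i$ whenever $j \ge N$.

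Finally, I would conclude by using the fact that $X = \bigcup_{i=1}^n X_i$, so any subset $Y$ of $X$ equals $\bigcup_{i=1}^n (Y \cap X_i)$. Applying this to $Y_j$ and $Y_N$ yields
\[ Y_j = \bigcup_{i=1}^n (Y_j \cap X_i) = \bigcup_{i=1}^n (Y_N \cap X_i) = Y_N \]
for all $j \ge N$, so the original chain stabilizes.

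I do not anticipate a real obstacle here: the proof is essentially bookkeeping, and it works precisely because the covering is finite (so that the maximum of the stabilization indices exists) and because the subspace topology makes the intersections $Y_j \cap X_i$ closed in $X_i$ without needing any hypothesis on how the $X_i$ sit inside $X$.
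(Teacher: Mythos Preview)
Your proof is correct and follows essentially the same approach as the paper: take a descending chain, intersect with each $X_i$, use noetherianity of $X_i$ to get stabilization indices, and then recover the original chain as the union of the intersected chains. The paper's version is more terse but the argument is identical.
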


% \begin{proof}
% Let $Z_{\bullet}$ be a descending chain of closed sets in $X$. Then $Z_{\bullet} \cap X_i$ is a descending chain in $X_i$, and thus stabilizes by noetherianity. Since $Z_{\bullet}$ is the union of the finitely many stable chains $Z_{\bullet} \cap X_i$, it too is stable.
% \end{proof}

\begin{remark}
Let $X$ be a topological space on which $G$ acts. We can then consider the quotient space $X/G$. The $G$-stable open (or closed, or irreducible) subsets of $X$ correspond to the open (or closed, or irreducible) subsets of $X/G$. Thus we can translate $G$-properties of $X$ to usual properties of $X/G$. For instance, $X$ is $G$-noetherian if and only if $X/G$ is noetherian. In this way, the above results can be applied in the $G$-noetherian setting.
\end{remark}

\section{Dimension vectors}

Let $\sD$ be the set of finite sequences $\delta=(\delta_1, \ldots, \delta_n)$, of variable length, where $\delta_1 \ge \delta_2 \ge \cdots$ and each $\delta_i$ is a non-negative integer. By convention, for $\delta \in \sD$ of length $n$, we put $\delta_i=-\infty$ for $i>n$. We order $\sD$ lexicographically, so that $\delta<\delta'$ if $\delta_1<\delta'_1$, or if $\delta_1=\delta_1'$ and $\delta_2<\delta_2'$, and so on. The unique minimal element of $\sD$ is the sequence $\delta$ of length~0; it has $\delta_i=-\infty$ for all $i$.

\begin{lemma}
  $(\sD,\le)$ is a well-order.
\end{lemma}

\begin{proof}
  It suffices to show that any strictly decreasing chain $\lambda^1 > \lambda^2 > \cdots$ must be finite. We prove this statement by induction on $\lambda^1_1$. If $\lambda^1_1=0$, this is clear as the chain can only consist of 1 element. So suppose $\lambda^1_1>0$ and that there is an infinite decreasing chain. Then the sequence of non-negative integers $\lambda^1_1 \ge \lambda^2_1 \ge \lambda^3_1 \ge \cdots$ is eventually constant, say equal to $c$; remove the finitely many $\lambda^i$ which do not have this first term and renumber the partitions $\lambda^1 > \lambda^2 > \cdots$. So there is a value $k \le \ell(\lambda^1)$ so that the sequence $(\lambda^n_i)_n$ is constant for $i<k$ and $(\lambda^n_k)_n$ converges to $c'<c$. In that case, remove the finitely many $\lambda^i$ such that $\lambda^i_k \ne c'$ and again renumber them $\lambda^1 > \lambda^2 > \cdots$. Define $\mu^i$ by removing the first $k-1$ parts from $\lambda^i$. By induction, $\mu^1 > \mu^2 > \cdots$ is finite, which is a contradiction.
\end{proof}

The following result allows for induction on elements of $\sD$:

\begin{proposition} \label{prop:dim-induct}
For each $\delta \in \sD$, let $\cP(\delta)$ be a boolean value. Suppose that $\cP(\delta')$ is true for all $\delta'<\delta$ implies $\cP(\delta)$ is true. Then $\cP(\delta)$ is true for all $\delta \in \sD$.
\end{proposition}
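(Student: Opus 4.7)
The plan is to reduce the induction principle to the well-foundedness of the ordinals by exhibiting a strictly order-preserving map $\phi \colon \sD \to \mathrm{Ord}$. Given such a $\phi$, the proposition is just transfinite induction: the hypothesis ``$\cP(\delta')$ for all $\delta' < \delta$ implies $\cP(\delta)$'' translates, via $\phi$, into ``$\cP(\delta')$ for all $\delta'$ with $\phi(\delta') < \phi(\delta)$ implies $\cP(\delta)$'', which is precisely transfinite induction on $\phi(\delta)$ and lets us conclude $\cP(\delta)$ for every $\delta \in \sD$.

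To construct $\phi$, I would use the formula
\[
\phi(\delta_1, \ldots, \delta_n) = \omega^{\delta_1 + 1} + \omega^{\delta_2 + 1} + \cdots + \omega^{\delta_n + 1},
\]
sending the empty sequence to $0$. The weakly-decreasing condition $\delta_1 \ge \delta_2 \ge \cdots$ makes this expression a Cantor normal form automatically, so ordinal comparisons reduce to termwise comparisons of the exponents. To verify order-preservation, take $\delta < \delta'$ and let $i$ be the smallest index where they differ. If $\delta_i = -\infty$ (so $\delta$ has length $i - 1$), then $\phi(\delta')$ is $\phi(\delta)$ extended by the positive ordinal $\omega^{\delta'_i + 1} + \cdots$, and $\phi(\delta) < \phi(\delta')$. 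Otherwise $0 \le \delta_i < \delta'_i$, and the tail of $\phi(\delta)$ starting at position $i$ is bounded by $\omega^{\delta_i + 1} \cdot k$ for some finite $k$, hence is strictly less than $\omega^{\delta_i + 2} \le \omega^{\delta'_i + 1}$, which is the leading term of the corresponding tail of $\phi(\delta')$; again $\phi(\delta) < \phi(\delta')$.

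The main (and only) obstacle is the second subcase of the order-preservation check, which uses that $\sum_{j \ge i} \omega^{\delta_j + 1}$ is dominated by the next power $\omega^{\delta_i + 2}$. This is a standard feature of Cantor normal forms, but it is what makes the argument go. An alternative, more elementary route would be to construct the minimum of the set of counterexamples directly by greedy coordinate-minimization (picking $a_1 = \min\{\delta_1 : \delta \in S\}$, then $a_2 = \min\{\delta_2 : \delta_1 = a_1\}$, and so on), but verifying that this process must terminate at some finite stage is essentially equivalent to knowing well-foundedness of $\sD$, so the ordinal embedding seems the more transparent path.
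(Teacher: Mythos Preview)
Your proof is correct and essentially the same as the paper's: both rest on the fact that the lexicographic order on $\sD$ is a well-order, after which the induction principle follows by the standard minimal-counterexample (equivalently, transfinite-induction) argument. The only difference is that the paper simply asserts $\le$ is a well-order and proceeds directly, whereas you justify well-foundedness explicitly via the order-embedding $\phi\colon \sD \hookrightarrow \omega^{\omega}$ in Cantor normal form; this extra step is correct and makes the argument self-contained, but the underlying idea is identical.
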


\begin{proof}
Let $S \subset \sD$ be the set of $\delta$ for which $\cP(\delta)$ is false. Since $\le$ is a well-order, if $S$ were non-empty then there would be a minimal element $\delta \in S$. But then $\cP(\delta')$ is true for all $\delta'<\delta$, so $\cP(\delta)$ would be true as well, a contradiction. So $S$ is empty.
\end{proof}

Let $X$ be a finite type scheme over a field $\bk$, and let $X_1, \ldots, X_n$ be the irreducible components of $X$, ordered by dimension (with $\dim(X_1)$ largest). We define the {\bf dimension vector} of $X$, denoted $\delta(X)$, to be the sequence $(\delta_1, \ldots, \delta_n)$, where $\delta_i=\dim(X_i)$. We regard it as an element of $\sD$. If $\bk$ is separably closed, $\delta$ is invariant under extension to a larger field. For a finite type morphism of schemes $X \to S$ and $s \in S$, we let $\delta_s(X)$ be $\delta(X_{\ol{s}})$, where $\ol{s}$ is a separably closed point at $s$. We say that $X \to S$ is {\bf $\delta$-constant} if $\delta_s(X)$ is independent of $s$, and then write $\delta(X)$ for the common value.

\section{Preliminaries from algebraic geometry}

\begin{proposition} \label{prop:descent}
Let $S=\Spec(A)$ be an affine scheme, and write $A=\bigcup_{i \in I} A_i$ (directed union) where each $A_i$ is finitely generated as a $\bZ$-algebra. Put $S_i=\Spec(A_i)$.
\begin{enumerate}[\indent \rm (a)]
\item Let $X \to S$ be a morphism of finite presentation. Then there exists $i \in I$ and a morphism of finite type $X_i \to S_i$ such that $X=(X_i)_S$.
\item Let $X_i$ and $Y_i$ be schemes of finite type over $S_i$, and let $\varphi \colon (X_i)_S \to (Y_i)_S$ be a morphism of schemes over $S$. Then there exists $j \ge i$ and a morphism $\varphi_j \colon (X_i)_{S_j} \to (Y_i)_{S_j}$ such that $\varphi$ is the base change of $\varphi_j$ to $S$.
\item Let $X_i \to S_i$ be a finite type morphism such that $(X_i)_S \to S$ is flat. Then there exists some $j \ge i$ in $I$ such that $(X_i)_{S_j} \to S_j$ is flat.
\item Let $X$ be a scheme of finite presentation over $S$ and let $Y$ be a closed subscheme of $X$ that is also of finite presentation over $S$. Then there exists $i \in I$, a finite type scheme $X_i$ over $S_i$, and a closed subscheme $Y_i$ of $X_i$ such that $Y \subset X$ is the base change of $Y_i \subset X_i$.
\end{enumerate}
\end{proposition}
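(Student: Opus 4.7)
The plan is to treat all four parts as standard instances of the principle that \emph{finitely presented data over $A = \varinjlim_i A_i$ descends to some $A_i$}, essentially the content of EGA~IV, \S8. In each case the strategy is to reduce to the affine situation and then observe that any finite subset of $A$ lies in some $A_i$.

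For (a), if $X$ is affine then $X = \Spec(A[x_1,\ldots,x_n]/(f_1,\ldots,f_m))$ for some polynomials $f_k$; the finitely many coefficients of the $f_k$ lie in some $A_i$, so setting $X_i = \Spec(A_i[x_1,\ldots,x_n]/(f_1,\ldots,f_m))$ does the job. For general $X$, cover by finitely many affine opens (possible because $X$ is quasi-compact over the affine $S$), descend each piece to a common $A_i$, and then descend the gluing data on their pairwise (also quasi-compact) intersections, enlarging $i$ as needed. Part (d) is analogous in the affine case---write $Y = \Spec(B/J)$ with $J$ generated by finitely many elements of a finitely presented $A$-algebra $B$, and descend those generators together with $B$---and reduces to the affine case in general using (a).

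For (b), reduce similarly to an affine cover: a morphism $\Spec(C_S) \to \Spec(B_S)$ corresponds to a ring map $B_S \to C_S$, which is determined by the images of the finitely many algebra generators of $B_i$ over $A_i$; these images involve only finitely many elements of $A$ and hence descend to some $A_j$. For (c), I would invoke the equational criterion of flatness: for a finitely presented $A$-module, flatness is equivalent to the existence of certain finite ``trivializations'' of the finitely many relations appearing in a presentation; these trivializations involve only finitely many elements of $A$, hence already exist over some $A_j$, yielding flatness there.

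The main obstacle throughout is the bookkeeping required in the non-affine gluing arguments of (a), (b), and (d), but this is routine given the quasi-compactness of all schemes involved. Part (c) is the most subtle of the four, though the actual descent argument is short once one invokes the equational criterion; alternatively, one could argue via the observation that $\Tor$ commutes with filtered colimits in the ring and that the relevant $\Tor$ groups for a finitely presented module are themselves finitely presented.
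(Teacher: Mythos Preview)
Your proposal is correct and, in fact, supplies considerably more detail than the paper's own proof, which for parts (a)--(c) consists entirely of citations to the Stacks Project (Tags 01ZM and 05LY). Your sketches for (a) and (b) are exactly the standard arguments underlying those references.

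Two small remarks. For part (c), your phrasing via the equational criterion is slightly informal: the clean statement is that a finitely presented flat module is projective, so the surjection $A^n \to M$ admits a section, and that section---being determined by finitely many elements of $A$---descends to some $A_j$. This is precisely the ``simultaneous trivialization of the presenting relations'' you allude to; your alternative via $\Tor$ is less direct, since one must check vanishing against \emph{all} $A_j$-modules, not only those pulled back from the system.

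For part (d), the paper takes a mildly different route from yours: rather than descending generators of the ideal sheaf, it first descends $X$ to some $X_i$, then descends $\cO_Y$ as a finitely presented $\cO_X$-module together with the map $\cO_{X_i} \to \cA$, and finally enlarges $i$ so that this map becomes surjective (using that isomorphisms and surjections of finitely presented modules descend, Stacks Tag 01ZL). Your direct approach via ideal generators is equally valid and arguably more transparent in the affine case; the paper's module-theoretic packaging has the minor advantage that the gluing is handled automatically by the sheaf-level citations.
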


\begin{proof}
  Parts (a) and (b) are parts of \cite[Tag 01ZM]{stacks}. Part (c) is a special case of \cite[Tag 05LY]{stacks}. Part (d) follows from \cite[Tag 0B8W]{stacks}.
  % For part (d), we argue as follows. Let $Y \subset X$ be given as in (d), and descend $X$ to $X_i$ over $S_i$. The closed immersion $Y \to X$ is of finite presentation \cite[Tags 01TY, 02FV]{stacks}, and so $\cO_Y$ is an $\cO_X$-module of finite presentation. By \cite[Tag 01ZR]{stacks}, after possibly replacing $i$ with a larger index, there is a finite presentation quasi-coherent sheaf $\cA$ on $X_i$ and a morphism $\cO_{X_i} \to \cA$ whose base change to $S$ is the surjection $\cO_X \to \cO_Y$.
  %By \cite[Tag 01ZL, parts (2), (3)]{stacks}, after replacing $i$ with a larger index, we can find an inverse to the map $\cO_{X_i}/\cK \to \cA$ where $\cK$ is the kernel of $\cO_{X_i} \to \cA$, and hence the original map defines a closed subscheme $Y_i$ of $X_i$ whose base change to $S$ is $Y$. 
\end{proof}

\begin{proposition} \label{prop:genflat}
Let $X \to S$ be a finite type morphism, with $S$ reduced. Then there is a dense open subset $U$ of $S$ such that $X_U \to X$ is flat of finite presentation.
\end{proposition}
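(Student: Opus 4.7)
The plan is to approximate $S$ by noetherian schemes via Proposition~\ref{prop:descent} and reduce to the classical noetherian form of generic flatness.

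First, I reduce to the case where $S = \Spec(A)$ and $X = \Spec(B)$ are both affine, with $B = A[y_1, \ldots, y_m]/I$. Reducing $S$ to affine: cover $S$ by affine opens and solve on each piece; the union of the dense opens is dense in $S$, since any non-empty open of $S$ meets some affine chart and hence its dense open. Reducing $X$ to affine: since $X \to S$ is quasi-compact over the affine base $S$, it has a finite affine cover, and a finite intersection of dense opens is dense.

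Next, write $A = \bigcup_i A_i$ as a directed union of finitely generated $\bZ$-subalgebras. Each $A_i$ is noetherian and reduced (as a subring of the reduced ring $A$). Set $I_i := I \cap A_i[y_1,\ldots,y_m]$, which is a finitely generated ideal of $A_i[y]$ by noetherianity, and observe that $\bigcup_i I_i \cdot A[y] = I$. The scheme $\widetilde{Y}_i := \Spec(A_i[y]/I_i)$ is of finite type, hence of finite presentation, over $S_i := \Spec(A_i)$. Applying the classical noetherian form of generic flatness, there is a dense open $\widetilde{V}_i \subset S_i$ over which $\widetilde{Y}_i \to S_i$ is flat. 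Pulling back along $\varphi_i \colon S \to S_i$, we obtain an open $V_i := \varphi_i^{-1}(\widetilde{V}_i) \subset S$ over which $Y_i := \widetilde{Y}_i \times_{S_i} S = \Spec(A[y]/I_i A[y])$ is flat of finite presentation.

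The main obstacle is to upgrade this conclusion from $Y_i$ to $X$ itself, since $X \hookrightarrow Y_i$ is in general a strict closed immersion. Using $\bigcup_i I_i A[y] = I$, I would argue that on a suitable further localization $U \subset V_i$, the closed immersion $X_U \hookrightarrow (Y_i)_U$ becomes an isomorphism (essentially because any finite list of elements of $I$ lies in $I_i A[y]$ for $i$ sufficiently large, and reducedness of $A$ forces these redundancies to kick in on a non-empty open once enough of the data is captured by $A_i$), so that $X_U \to U$ inherits flatness of finite presentation from $Y_i$. Density of the union $\bigcup_i V_i$ in $S$ follows from the injectivity of $A_i \hookrightarrow A$: the map $\varphi_i$ has dense image on spectra, so for any basic open $D(g) \subset S$ with $g \in A_i$, the dense open $\widetilde{V}_i$ meets the dense image $\varphi_i(D(g))$ non-trivially, producing a point of $V_i \cap D(g)$. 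The delicate task, which I expect to be the technical heart, is making the upgrade $Y_i \rightsquigarrow X$ uniform enough to carve out a dense open of $S$.
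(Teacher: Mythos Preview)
The paper's proof is a one-line citation of \cite[Tag 052B]{stacks}, which is exactly the statement of the proposition; there is no argument to compare against beyond that reference.

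Your route---reduce to the noetherian case via approximation---is genuinely different, and the reduction to affines together with the density argument for $\bigcup_i V_i$ are fine. But the step you flag as ``the technical heart'' is a real gap, not just missing detail. You need to pass from $Y_i=\Spec(A[y]/I_iA[y])$ to $X=\Spec(A[y]/I)$, i.e., to show that the ideal $J=I/I_iA[y]$ vanishes on a dense open of $S$. You correctly observe that for each minimal prime $\fp$ of $A$ one has $J_\fp=0$ once $i$ is large (since $A_\fp$ is a field and $I\otimes A_\fp$ is finitely generated). The problem is spreading this out: $J$ is a module over $A[y]/I_iA[y]$ that is not known to be finitely generated, so its support need not be closed, and vanishing at $\fp$ does not by itself yield vanishing on a neighbourhood. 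Your phrase ``reducedness of $A$ forces these redundancies to kick in on a non-empty open'' is exactly the assertion that needs proof and does not follow from anything established so far.

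In fact, what you need to make this work is the statement that a finite type algebra over a reduced ring is generically of finite presentation. That statement is essentially equivalent in difficulty to the proposition itself, and the standard proof (the one behind \cite[Tag 052B]{stacks}) obtains flatness and finite presentation simultaneously by a direct d\'evissage over the reduced base, \emph{not} by noetherian approximation. The obstruction to your approach is structural: Proposition~\ref{prop:descent}(a) only descends morphisms of finite \emph{presentation}, so a finite type $X\to S$ does not descend to the $S_i$, and your workaround via $I_i$ reintroduces precisely the finite-presentation defect you are trying to eliminate. If you want to keep the approximation strategy, you would first need an independent proof of generic finite presentation; otherwise, cite \cite[Tag 052B]{stacks} as the paper does.
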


\begin{proof}
This follows from a general version of generic flatness \cite[Tag 052B]{stacks}.
\end{proof}

\begin{proposition} \label{prop:delta-const-noeth}
Let $X \to S$ be a finite type morphism of noetherian schemes. Then there are open sets $U_1, \ldots, U_n$ of $S$ with dense union such that $X_{U_i} \to U_i$ is $\delta$-constant.
\end{proposition}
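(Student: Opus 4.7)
The plan is to reduce to the case where $S$ is integral and then to produce a single dense open $U \subseteq S$ over which $X \to S$ is $\delta$-constant, by spreading out the irreducible decomposition of the geometric generic fiber. For the reduction: if $S_1, \ldots, S_r$ are the irreducible components of $S$ and $T_j = S_j \setminus \bigcup_{i \neq j} S_i$, then the $T_j$ are pairwise disjoint open subsets of $S$, each integral when given its reduced induced structure, and $\bigcup T_j$ is dense in $S$; so it suffices to produce, for each $T_j$ separately, a dense open $V_j \subseteq T_j$ over which $X_{V_j} \to V_j$ is $\delta$-constant.

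Assume now that $S$ is integral with generic point $\eta$ and function field $k$, and set $(d_1, \ldots, d_N) = \delta(X_{\overline\eta})$. The irreducible decomposition of $X_{\overline\eta}$ is already defined over some finite separable extension $k'/k$, giving $X_{k'} = Y_1 \cup \cdots \cup Y_N$ with each $Y_i$ geometrically irreducible of dimension $d_i$ over $k'$. Spreading this extension out (using that $k'$ is generated by one separable element whose minimal polynomial and discriminant can be cleared into $\cO_S$ on a dense open), one obtains a dense open $U_0 \subseteq S$ and a finite \'etale surjection $\pi \colon S' \to U_0$ with $S'$ integral having function field $k'$. Let $Z_i \subseteq X_{S'}$ be the closure of $Y_i$; each $Z_i$ is irreducible, dominates $S'$ with generic fiber $Y_i$, and satisfies $\dim Z_i = \dim S' + d_i$. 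After shrinking $U_0$ (and hence $S'$) to remove the closed images in $U_0$ of the remaining irreducible components of $X_{S'}$ (which cannot dominate $S'$ for dimension reasons), we may assume $X_{S'} = Z_1 \cup \cdots \cup Z_N$.

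Now shrink $U_0$ further so that over $\pi^{-1}(U_0)$: (i) each $Z_i \to S'$ is flat (Proposition~\ref{prop:genflat}); (ii) each $Z_i \to S'$ has geometrically irreducible fibers (an open condition for flat, finite-presentation morphisms whose generic fiber is geometrically irreducible, cf.\ EGA IV 9.7.7); and (iii) for each $i \neq j$, every fiber of $Z_i \cap Z_j \to S'$ has dimension strictly less than $\min(d_i, d_j)$, valid on a dense open by upper semicontinuity of fiber dimension since $Z_i \cap Z_j \subsetneq Z_i$ forces $\dim(Z_i \cap Z_j) < \dim S' + d_i$. Any dense open of $S'$ descends to a dense open of $U_0$ because $\pi$ is finite surjective, hence closed: if $W \subseteq S'$ is a dense open, then $U_0 \setminus \pi(S' \setminus W)$ is open in $U_0$, and it is dense since otherwise $\pi(S' \setminus W)$ would equal $U_0$, forcing $\dim(S' \setminus W) = \dim S' $, contradicting that $W$ is dense. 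Let $U \subseteq S$ be the resulting dense open. For $s \in U$, pick a lift $s' \in \pi^{-1}(U)$; since $\kappa(s')/\kappa(s)$ is finite separable, we may identify the geometric points $\overline s = \overline{s'}$, whereupon $X_{\overline s} = (X_{S'})_{\overline{s'}} = \bigcup_i (Z_i)_{\overline{s'}}$. Each $(Z_i)_{\overline{s'}}$ is irreducible of dimension $d_i$ by (i)--(ii), and by (iii) no one is contained in another. Thus $\delta_s(X) = (d_1, \ldots, d_N)$, so $X_U \to U$ is $\delta$-constant.

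The main obstacle is step (iii): even after arranging that each $(Z_i)_{\overline{s'}}$ is irreducible of its expected dimension $d_i$, one must rule out the possibility that a lower-dimensional component is swallowed by a higher-dimensional one in some special fiber. The pairwise-intersection dimension bound handles this uniformly, and is what allows one to conclude that the dimension vector is actually constant rather than merely bounded below.
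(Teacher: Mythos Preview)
Your argument follows essentially the same route as the paper: reduce to an integral base, pass to a finite \'etale cover so the generic fiber splits into geometrically irreducible pieces, spread these out as closures $Z_i$, and shrink the base until every fiber inherits this decomposition. The one substantive difference is how you certify that the $(Z_i)_{\overline{s'}}$ remain \emph{distinct} irreducible components: the paper invokes \cite[Tag 0554]{stacks} to get a lower bound on the number of geometric irreducible components of each fiber, which together with $X_s = \bigcup X_{i,s}$ forces exactly $n$ components; you instead bound $\dim (Z_i\cap Z_j)_{s'} < \min(d_i,d_j)$ to rule out containments. Both mechanisms work, and yours is arguably more elementary since it avoids the constructibility of the component-count.

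Two places deserve tightening. First, the justification for (iii) via $\dim(Z_i\cap Z_j) < \dim S' + d_i$ relies on a dimension formula that need not hold over an arbitrary noetherian base; the clean argument is to note that the \emph{generic} fiber $Y_i \cap Y_j$ has dimension $<\min(d_i,d_j)$ and then apply Chevalley's upper semicontinuity of $\dim_z$ plus constructibility of the image. Second, the claim that (i)+(ii) alone force $\dim (Z_i)_{\overline{s'}} = d_i$ is not immediate; you should insert one more shrinking step so that each $Z_i \to S'$ has fibers of constant dimension (e.g.\ \cite[Tag 05F6]{stacks}, exactly as the paper does). The phrase ``for dimension reasons'' when discarding the extra components of $X_{S'}$ is also loose: the real reason is that any component dominating $S'$ equals the closure of its generic fiber, which is some $Y_i$, hence equals $Z_i$.
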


\begin{proof}
This proof follows \cite[Tag 055A]{stacks} closely (this proposition is really just a refinement of loc.\ cit.). Since $S$ is noetherian, we can replace it with an open dense subscheme in which no two irreducible components intersect. Thus $S$ is the disjoint union of its irreducible components, so we may just assume $S$ is irreducible. By \cite[Tag 0551]{stacks}, after replacing $S$ with a non-empty open subset, we can find a surjective finite \'etale morphism $S' \to S$ with $S'$ irreducible such that all irreducible components of $X'_{\eta}$ are geometrically irreducible, where $\eta$ is the generic point of $S'$ and $X'=X \times_S S'$. Since $S' \to S$ is open, we may as well replace $S$ with $S'$. We may further assume $S$ is integral, as $\delta$ is insensitive to nilpotents.

Let $X_{1,\eta}, \ldots, X_{n,\eta}$ be the irreducible components of $X_{\eta}$. These are all geometrically irreducible by our reductions. Let $X_i$ be the closure of the image of $X_{i,\eta}$ in $X$. After replacing $S$ with a non-empty open subset, we can assume $X$ is the union of the $X_i$ \cite[Tag 054Y]{stacks}. After shrinking $S$ again, we can assume that $X_{i,s}$ is geometrically irreducible for all $s \in S$ \cite[Tag 0559]{stacks}. After shrinking $S$ yet again, we can assume that each fiber of $X \to S$ has at least $n$ irreducible components \cite[Tag 0554]{stacks}. Since $X_s=X_{1,s} \cup \cdots \cup X_{n,s}$, it follows that for every $s \in S$, the fiber $X_s$ has exactly $n$ irreducible components, namely the $X_{i,s}$, and they are each geometrically irreducible. Finally, by \cite[Tag 05F6]{stacks}, we can find an open subset of $S$ such that the fibers of $X_i \to S$ have constant dimension for each $i$.
\end{proof}

\begin{proposition} \label{prop:delta-const}
Let $X \to S$ be a finite type morphism of non-empty reduced schemes. Then there is a non-empty open subset $U \subseteq S$ such that $X_U \to U$ is flat of finite presentation and $\delta$-constant.
\end{proposition}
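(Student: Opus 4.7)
The plan is to descend the situation to a noetherian base, apply the noetherian version Proposition~\ref{prop:delta-const-noeth}, and pull back. First I would replace $S$ by a non-empty affine open, so that $S = \Spec(A)$ with $A$ reduced. By Proposition~\ref{prop:genflat}, after further shrinking to a dense open of the form $\Spec(A[1/f])$, I may assume that $X \to S$ is already flat of finite presentation; I rename to keep $S = \Spec(A)$.

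Next, write $A = \bigcup_{i \in I} A_i$ as a directed union of its finitely generated $\bZ$-subalgebras and set $S_i = \Spec(A_i)$, so each $S_i$ is noetherian and $S = \varprojlim S_i$. By Proposition~\ref{prop:descent}(a), the morphism $X \to S$ descends to a finite-type morphism $X_i \to S_i$ for some $i$. By Proposition~\ref{prop:descent}(c), after enlarging $i$, I may assume that $X_i \to S_i$ is itself flat of finite presentation. Applying Proposition~\ref{prop:delta-const-noeth} to $X_i \to S_i$ yields open subsets $U_{i,1}, \ldots, U_{i,n}$ of the noetherian scheme $S_i$ whose union is dense, and such that each $X_i|_{U_{i,k}} \to U_{i,k}$ is $\delta$-constant. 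Since $S_i$ is non-empty, at least one $U_{i,k}$ is non-empty; fix such a $k$.

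Finally, let $\pi \colon S \to S_i$ be the canonical map and set $U = \pi^{-1}(U_{i,k})$. Since $A_i \hookrightarrow A$ is injective, $\pi$ is dominant, so $U$ is a non-empty open subset of $S$. The morphism $X_U \to U$ is the base change of $X_i|_{U_{i,k}} \to U_{i,k}$, hence flat of finite presentation. For $\delta$-constancy: if $s \in U$ maps to $t \in U_{i,k}$, then the fiber $(X_U)_s$ is obtained from $(X_i)_t$ by base change along $\kappa(t) \hookrightarrow \kappa(s)$, and since dimension vectors are invariant under passing to a larger separably closed extension, $\delta_s(X_U) = \delta_t(X_i)$, which is the common value on $U_{i,k}$.

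I expect the main technical obstacle to be the interplay between generic flatness and descent: Proposition~\ref{prop:descent}(c) only converts flatness-after-base-change into flatness-before-base-change once the morphism is already known to be of finite presentation with flat total space, so we must arrange flatness via Proposition~\ref{prop:genflat} \emph{before} invoking the descent step. Once the problem has been transported to the noetherian base $S_i$, the rest of the argument is a direct appeal to Proposition~\ref{prop:delta-const-noeth} together with the elementary observation that $\delta$-constancy is preserved under base change.
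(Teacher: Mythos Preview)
Your proof is correct and follows essentially the same approach as the paper: arrange flatness via Proposition~\ref{prop:genflat}, descend to a noetherian base via Proposition~\ref{prop:descent}, apply Proposition~\ref{prop:delta-const-noeth} there, and pull back along the dominant map $S \to S_i$. The only notable difference is that your use of Proposition~\ref{prop:descent}(c) to make $X_i \to S_i$ flat is unnecessary---Proposition~\ref{prop:delta-const-noeth} does not require flatness, and flatness of $X_U \to U$ is already inherited from $X \to S$---but this extra step is harmless.
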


\begin{proof}
By Proposition~\ref{prop:genflat}, after replacing $S$ with a dense open subscheme, we can assume $X \to S$ is flat of finite presentation. Replacing $S$ with some affine open, we can assume $S$ is affine. By Proposition~\ref{prop:descent}, we can find a finite type morphism $X' \to S'$ with $S'$ noetherian such that $X$ is the base change of $X'$ along a morphism $f \colon S \to S'$. We may as well replace $S'$ with the scheme-theoretic image of $f$, which is just the reduced subscheme structure on $\ol{f(S)}$ \cite[Tag 056B]{stacks}. In particular, $f$ has dense image. By Proposition~\ref{prop:delta-const-noeth}, there is a non-empty open subset $U'$ of $S'$ such that $X'_{U'} \to U'$ is $\delta$-constant. Since $f(S)$ is dense in $S'$ it must meet $U'$. Therefore $U=f^{-1}(U')$ is a non-empty open subset of $S$ such that $X_U \to U$ is $\delta$-constant (and still flat of finite presentation).
\end{proof}

\begin{proposition} \label{prop:open-flat}
Let $f \colon X \to S$ be a flat morphism of finite presentation, let $U$ be an open dense subset of $S$, and let $Y$ be a proper closed subset of $X$. Then $Y_U$ is a proper closed subset of $X_U$.
\end{proposition}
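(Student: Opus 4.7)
The plan is to reduce the statement to the two purely topological facts established in Propositions~\ref{prop:dense-inverse} and~\ref{prop:dense-sub}, with the only scheme-theoretic input being the openness of $f$. Recall that a flat morphism of finite presentation between schemes is open on underlying topological spaces (this is a standard result, e.g.\ \cite[Tag 01UA]{stacks}). So I would begin by invoking this to conclude that the map $f \colon X \to S$ is an open map of topological spaces.

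Given that $f$ is open and $U \subseteq S$ is dense open, Proposition~\ref{prop:dense-inverse} applies directly and shows that $f^{-1}(U)$ is a dense open subset of $X$. Note that $X_U$ and $f^{-1}(U)$ coincide as subsets of $|X|$ (base change of an open immersion is an open immersion onto the preimage), and similarly $Y_U = Y \cap f^{-1}(U)$ as subsets of $|X|$ since $Y \hookrightarrow X$ is a closed immersion.

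Now $Y$ is a proper closed subset of $X$ and $f^{-1}(U) = X_U$ is a dense open subset of $X$. Applying Proposition~\ref{prop:dense-sub} with the roles ``$Z$''$=Y$ and ``$U$''$=X_U$, we conclude that $Y \cap X_U$ is a proper closed subset of $X_U$. Since this intersection is exactly $Y_U$, we are done.

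There is no genuine obstacle here: the entire proof is an assembly of the preceding topological lemmas, and the only non-topological input is the well-known openness theorem for flat morphisms of finite presentation. The only thing to keep straight is the (trivial) identification of the fiber products $X_U$ and $Y_U$ with the corresponding set-theoretic preimages and intersections inside $|X|$, which is immediate from the fact that $U \hookrightarrow S$ is an open immersion and $Y \hookrightarrow X$ is a closed immersion.
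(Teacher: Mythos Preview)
Your proof is correct and follows essentially the same approach as the paper's own proof: invoke openness of flat finite-presentation morphisms via \cite[Tag 01UA]{stacks}, apply Proposition~\ref{prop:dense-inverse} to get that $f^{-1}(U)=X_U$ is dense open in $X$, then apply Proposition~\ref{prop:dense-sub} to conclude $Y_U=Y\cap X_U$ is proper closed in $X_U$.
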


\begin{proof}
Since $f$ is flat of finite presentation, it is open \cite[Tag 01UA]{stacks}. Thus $f^{-1}(U)$ is a dense open subset of $X$ (Proposition~\ref{prop:dense-inverse}). It follows that $Y_U=f^{-1}(U) \cap Y$ is a proper closed subset of $X_U=f^{-1}(U)$ (Proposition~\ref{prop:dense-sub}).
\end{proof}

\begin{proposition} \label{prop:delta-const}
Let $X \to S$ be a flat finite type morphism of noetherian schemes. Assume there is a dense subset $A$ of $S$ such that $s \mapsto \delta_s(X)$ is constant for $s \in A$. Then there is an open dense subset $U$ of $S$ such that $X_U \to U$ is $\delta$-constant.
\end{proposition}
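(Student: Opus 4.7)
The plan is to reduce immediately to Proposition \ref{prop:delta-const-noeth} and use density of $A$ to show that all the pieces produced there carry the same dimension vector.

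First, I would apply Proposition \ref{prop:delta-const-noeth} to produce open subsets $U_1, \ldots, U_n$ of $S$, whose union is dense, such that each restriction $X_{U_i} \to U_i$ is $\delta$-constant; write $\delta^{(i)}$ for the common value of $\delta_s(X)$ on $U_i$. Discarding any empty $U_i$'s, I may assume each $U_i$ is non-empty. Let $\delta_0$ denote the common value of $\delta_s(X)$ on $A$.

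Next, the key observation: since $A$ is dense in $S$, it meets every non-empty open subset of $S$, and in particular $A \cap U_i$ is non-empty for each $i$. Picking any $s_i \in A \cap U_i$, on one hand $\delta_{s_i}(X) = \delta_0$ by the hypothesis on $A$, and on the other hand $\delta_{s_i}(X) = \delta^{(i)}$ by $\delta$-constancy on $U_i$. Hence $\delta^{(i)} = \delta_0$ for every $i$.

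Finally, I would set $U = U_1 \cup \cdots \cup U_n$. This is open in $S$ and dense by construction, and the fiberwise dimension vector $\delta_s(X)$ takes the single value $\delta_0$ on all of $U$, so $X_U \to U$ is $\delta$-constant. I do not expect any serious obstacle here — the whole content is repackaging Proposition \ref{prop:delta-const-noeth} together with the elementary fact that a dense subset meets every non-empty open.
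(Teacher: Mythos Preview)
Your proof is correct and follows essentially the same approach as the paper: apply Proposition~\ref{prop:delta-const-noeth} to obtain the $U_i$, use density of $A$ to force all the $\delta^{(i)}$ to coincide, and take $U$ to be the union. The paper's argument is identical, just slightly more terse.
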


\begin{proof}
Applying Proposition~\ref{prop:delta-const-noeth}, there are open subsets $U_1, \ldots, U_n$ of $S$ such that $U=U_1 \cup \cdots \cup U_n$ is dense and $X_{U_i} \to U_i$ is $\delta$-constant. Since $A$ is dense, it meets each $U_i$, and so $\delta(X_{U_i})$ is independent of $i$. It follows that $X_U \to U$ is $\delta$-constant.
\end{proof}

\begin{proposition} \label{prop:delta-decrease-noeth}
Let $f \colon X \to S$ be a finite type morphism of reduced noetherian schemes that is flat and $\delta$-constant. Let $Y$ be a proper closed subscheme of $X$. Then there exists a non-empty open subset $U$ of $S$ such that $Y_U \to U$ is $\delta$-constant and $\delta(Y_U)<\delta(X_U)$.
\end{proposition}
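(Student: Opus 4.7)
My plan is to locate a non-empty open $U \subseteq S$ on which two conditions hold simultaneously: $Y_U \to U$ is $\delta$-constant, and for every $s \in U$ the fiber $Y_s$ is strictly smaller than $X_s$ as a topological space. For the first condition I apply Proposition~\ref{prop:delta-const-noeth} to the finite type morphism $Y \to S$, obtaining open sets $V_1,\ldots,V_k$ of $S$ with dense union such that each $Y_{V_i} \to V_i$ is $\delta$-constant; any non-empty open subset of some $V_i$ will then inherit $\delta$-constancy of $Y$.

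For the second condition, I use that $X \to S$ is flat of finite type over a noetherian base, hence of finite presentation and open by \cite[Tag 01UA]{stacks}. The set $X \setminus \vert Y \vert$ is open and non-empty since $Y$ is a proper closed subscheme of $X$, so $\Omega := f(X \setminus \vert Y \vert)$ is a non-empty open subset of $S$, and for every $s \in \Omega$ we have $Y_s \subsetneq X_s$ topologically. Since $V_1 \cup \cdots \cup V_k$ is dense in $S$ and $\Omega$ is a non-empty open set, some $V_i$ meets $\Omega$, and I take $U := V_i \cap \Omega$. Base changing $Y_s \subsetneq X_s$ along a separably closed point gives $Y_{\bar{s}} \subsetneq X_{\bar{s}}$ as topological spaces for every $s \in U$.

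The last step is a combinatorial lemma: for a finite type scheme $W$ over a separably closed field and a proper closed subset $W' \subsetneq W$, one has $\delta(W') < \delta(W)$ in the lex order. I would prove this by induction on the number $n$ of irreducible components of $W$. For the inductive step, either the top-dimensional component $W_1$ is contained in $W'$, in which case $W_1$ is also a top-dimensional component of $W'$, and one applies the inductive hypothesis to the proper inclusion $W' \cap (W_2 \cup \cdots \cup W_n) \subsetneq W_2 \cup \cdots \cup W_n$; or $W_1 \not\subseteq W'$, in which case the number of components of $W'$ attaining the maximal dimension $\delta_1$ is strictly smaller than the corresponding count for $W$, and the lex comparison is won immediately at the first position where the two sequences differ. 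Applying this lemma to $(W, W') = (X_{\bar{s}}, Y_{\bar{s}})$ for any $s \in U$ yields $\delta_s(Y) < \delta_s(X) = \delta(X)$; since $\delta_s(Y)$ is constant on $U$ with value $\delta(Y_U)$, this gives $\delta(Y_U) < \delta(X_U)$ and completes the proof.

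I expect the only real content to be the combinatorial lemma on the lex order; the rest is a direct assembly of Propositions~\ref{prop:delta-const-noeth} and the openness of flat finitely presented morphisms, together with the density of $\bigcup V_i$.
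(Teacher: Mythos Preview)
Your proof is correct and follows essentially the same route as the paper: apply Proposition~\ref{prop:delta-const-noeth} to $Y \to S$, use openness of the flat finitely presented morphism $f$ to locate where the fibers of $Y$ are properly contained in those of $X$, and finish with the combinatorial lemma comparing $\delta$ of a finite type scheme over a field with that of a proper closed subset. The only cosmetic difference is that you work directly with $\Omega = f(X \setminus |Y|)$ so that $Y_{\bar s} \subsetneq X_{\bar s}$ holds at \emph{every} point of $U = V_i \cap \Omega$, whereas the paper passes through Proposition~\ref{prop:open-flat} to get $Y_U \subsetneq X_U$ and then checks the inequality at a generic point; your combinatorial induction on the number of components is also equivalent to the paper's induction on dimension.
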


\begin{proof}
By Proposition~\ref{prop:delta-const-noeth}, there are open subsets $V_1, \ldots, V_n$ of $S$ such that $Y_{V_i} \to V_i$ is $\delta$-constant and $V=V_1 \cup \cdots \cup V_n$ is dense in $S$. By Proposition~\ref{prop:open-flat}, $Y_V$ is a proper closed subset of $X_V$. Thus $Y_{V_i}$ is a proper subset of $X_{V_i}$ for some $i$. Put $U=V_i$ for this $i$. Thus $Y_U$ is a proper closed subset of $X_U$ and $Y_U \to U$ is $\delta$-constant. It remains to show that $\delta(Y_U)<\delta(X_U)$. Since both are $\delta$-constant, we can verify this over a generic point of $U$. 

So assume that $X$ is a reduced finite type scheme over a field and $Y$ is a closed subscheme. Let $X_1, \dots, X_d$ be the irreducible components of $X$ which have largest possible dimension $n$. Suppose one of them is not an irreducible component of $Y$. Then $\delta(X) = (n, n, \dots, n, \dots)$ with $n$ repeated $d$ times, but $\delta(Y)$ has $<d$ instances of $n$, so $\delta(Y) < \delta(X)$. In the other case, all of the $X_i$ are irreducible components of $Y$. Then both $\delta(Y)$ and $\delta(X)$ begin with $d$ instances of $n$, and we replace $X$ and $Y$ with the union $X'$ and $Y'$ of their components not equal to one of $X_1,\dots,X_d$. In particular, $X' \ne Y'$, and by induction on dimension, we have $\delta(X') > \delta(Y')$ which implies $\delta(X) > \delta(Y)$.
\end{proof}

\begin{proposition} \label{prop:delta-decrease}
  Let $f \colon X \to S$ be a flat morphism of finite presentation between reduced schemes that is $\delta$-constant. Let $Y$ be a proper closed subscheme of $X$. Then there is a non-empty open subset $U$ of $S$ such that $Y_U \to U$ is flat of finite presentation and $\delta$-constant with $\delta(Y_U)<\delta(X_U)$.

  Furthermore, if $X,S$ are $G$-schemes such that $f$ is $G$-equivariant and $Y$ and $U$ are $G$-stable, then we can take $U$ to be $G$-stable.
\end{proposition}

\begin{proof}
Since $Y$ is a closed subscheme of $X$ and $X$ is finite type over $S$, it follows that $Y$ is finite type over $S$. Thus, by Proposition~\ref{prop:genflat}, we can find an open dense subset $U$ of $S$ such that $Y_U \to U$ is flat of finite presentation. By Proposition~\ref{prop:open-flat}, $Y_U$ is a proper closed subset of $X_U$. Thus we may as well replace $S$ with $U$, and just assume that $Y$ is flat and of finite presentation over $S$.

Replace $S$ with an affine open so that $Y$ is still a proper closed subscheme of $X$. By Proposition~\ref{prop:descent}(d), there is a noetherian scheme $S'$, a finite type morphism $X' \to S'$, a closed subscheme $Y'$ of $X'$, and a morphism $g \colon S \to S'$ such that $Y \subset X$ is the pullback of $Y' \subset X'$. By Proposition~\ref{prop:descent}(c), we can assume that $X' \to S'$ is flat. We may as well replace $S'$ with the scheme-theoretic image of $g$, which is just $\ol{g(S)}$ with the reduced subscheme structure, so we can assume that $g(S)$ is dense in $S'$. Since $\delta_{g(s)}(X')=\delta_s(X)$ is constant for $s \in S$ and $g(S)$ is dense, it follows from Proposition~\ref{prop:delta-const} that there is a dense open subset $V'$ of $S'$ such that $X'_{V'} \to V'$ is $\delta$-constant. By Proposition~\ref{prop:open-flat}, $Y'_{V'}$ is a proper closed subset of $X'_{V'}$. By Proposition~\ref{prop:delta-decrease-noeth}, there is a non-empty open subset $U'$ of $V'$ such that $Y'_{U'} \to U'$ is $\delta$-constant with $\delta(Y'_{U'})<\delta(X'_{U'})$. Since $g(S)$ is dense in $S'$, it meets $U'$, and so $U=g^{-1}(U')$ is a non-empty open subset of $S$. Clearly, $Y_U \to U$ is $\delta$-constant with $\delta(Y)<\delta(X)$.

For the last statement, we use the above proof to get an open set $V$ and then take $U = \bigcup_{g \in G} gV$.
\end{proof}

% \begin{corollary} \label{cor:Gversion}
%   Let $f \colon X \to S$ be a $G$-equivariant flat morphism of finite presentation between reduced $G$-schemes that is $\delta$-constant. Let $Y$ be a proper closed $G$-stable subscheme of $X$. Then there is a non-empty $G$-stable open subset $U$ of $S$ such that $Y_U \to U$ is flat of finite presentation and $\delta$-constant with $\delta(Y_U)<\delta(X_U)$.
% \end{corollary}

% \begin{proof}
% By Proposition~\ref{prop:delta-decrease}, there exists an open set $V$, not necessarily $G$-equivariant, with the desired property. Now take $U=\bigcup_{g \in G} gV$.
% \end{proof}

\section{Proof of main results}

Consider the following statement, for $\delta \in \sD$.
\begin{quotation}
{\bf Statement $\cP(\delta)$.} Let $X \to S$ be a $G$-equivariant map of reduced schemes that is flat of finite presentation and $\delta$-constant with $\delta(X)=\delta$. Suppose that $S$ is topologically $G$-noetherian, and that for every proper $G$-stable closed subset $S'$ of $S$ the scheme $X_{S'}$ is topologically $G$-noetherian. Then $X$ is topologically $G$-noetherian.
\end{quotation}

\begin{lemma}
Statement $\cP(\delta)$ is true for all $\delta$.
\end{lemma}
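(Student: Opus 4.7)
The plan is to prove $\cP(\delta)$ for all $\delta \in \sD$ by well-founded induction on $\delta$, as supplied by Proposition~\ref{prop:dim-induct}. The base case $\delta = ()$ is trivial: then $X$ is empty. For the inductive step, I assume $\cP(\delta')$ for all $\delta' < \delta$, take a descending chain $Z_1 \supseteq Z_2 \supseteq \cdots$ of $G$-stable closed subsets of $X$, and aim to show it stabilizes. Since the statement is topological and $\delta$ is insensitive to nilpotents, I may first arrange that $X$ and $S$ are reduced. If all $Z_i = X$ there is nothing to prove, so I discard initial terms to assume $Z_1 \neq X$.

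The central move is to apply Corollary~\ref{cor:Gversion} to the pair $Z_1 \subset X$, obtaining a non-empty $G$-stable open $U \subseteq S$ such that $(Z_1)_U \to U$ is flat of finite presentation and $\delta$-constant with $\delta((Z_1)_U) < \delta$. This positions $(Z_1)_U \to U$ to receive the inductive hypothesis $\cP(\delta((Z_1)_U))$, and I now verify its two hypotheses. That $U$ is topologically $G$-noetherian is automatic, since open subspaces of $G$-noetherian spaces inherit $G$-noetherianity. The subtler point, which I expect to be the main technical obstacle, is that $(Z_1)_{U'}$ should be $G$-noetherian for every proper $G$-stable closed $U' \subset U$. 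Here I take the closure $\overline{U'}$ of $U'$ in $S$ and observe that $U' = \overline{U'} \cap U \neq U$ forces $\overline{U'} \not\supseteq U$, so $\overline{U'}$ is a proper $G$-stable closed subset of $S$; the standing hypothesis on $X \to S$ then gives that $X_{\overline{U'}}$ is $G$-noetherian, the open subspace $X_{U'}$ inherits this (noting $U'$ is open in $\overline{U'}$), and $(Z_1)_{U'}$ is a closed subspace.

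The inductive hypothesis then yields that $(Z_1)_U$ is topologically $G$-noetherian, so the chain $(Z_i)_U$ stabilizes. On the closed complement, either $U = S$ and nothing more is needed, or $S \setminus U$ is a proper $G$-stable closed subset of $S$ and the standing hypothesis on $X \to S$ gives that $X_{S \setminus U}$ is $G$-noetherian, so the chain $(Z_i)_{S \setminus U}$ stabilizes. Combining the two pieces via the $G$-equivariant version of Proposition~\ref{prop:noeth-pieces} (passing to the quotient by $G$ as noted in the remark following that proposition) applied to the cover $X = X_U \cup X_{S \setminus U}$, I conclude that $Z_\bullet$ stabilizes, completing the induction.
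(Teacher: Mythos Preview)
Your proof is correct and follows the same route as the paper's: induct on $\delta$, apply Corollary~\ref{cor:Gversion} to a proper $G$-stable closed subset to drop $\delta$, then split over $U$ and $S \setminus U$ using Proposition~\ref{prop:noeth-pieces}. You are in fact more careful than the paper in explicitly verifying the second hypothesis of $\cP(\delta')$ for $(Z_1)_U \to U$ via your closure argument (which is correct: $\overline{U'} \cap U = U'$ forces $\overline{U'} \subsetneq S$); the paper leaves this check implicit.
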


\begin{proof}
  We proceed by induction on $\delta$ (Proposition~\ref{prop:dim-induct}). Thus let $\delta \in \sD$ and $X \to S$ as in Statement~$\cP(\delta)$ be given, and assume $\cP(\delta')$ holds for all $\delta'<\delta$. It suffices to show that every proper $G$-stable closed subset of $X$ is topologically $G$-noetherian. Thus let such a $Y$ be given, and endow $Y$ with the reduced subscheme structure. By Proposition~\ref{prop:genflat}, there is a non-empty open subset (which we may assume $G$-stable) $V$ so that $Y_V \to V$ is flat of finite presentation. By Proposition~\ref{prop:delta-decrease} there is a non-empty $G$-stable open subset $U$ of $V$ such that $Y_U \to U$ is flat of finite presentation and $\delta$-constant with $\delta'=\delta(Y_U)<\delta=\delta(X_U)$. Thus by $\cP(\delta')$, we have that $Y_U$ is $G$-noetherian. Since $X_{S \setminus U}$ is topologically $G$-noetherian, by the hypothesis of $\cP(\delta)$, the space $Y_{S \setminus U}$ is also topologically $G$-noetherian. It follows that $Y$ is topologically $G$-noetherian (Proposition~\ref{prop:noeth-pieces}), which completes the proof.
\end{proof}

\begin{proof}[Proof of Theorem~\ref{thm:Ghilbert}]
Let $X \to S$ be the given $G$-equivariant map of schemes. Since the statement is topological, we may assume that $X$ and $S$ are reduced. We proceed by noetherian induction on $S$: that is, we assume that for every proper closed subset $S'$ of $S$ the space $X_{S'}$ is topologically $G$-noetherian. By Proposition~\ref{prop:genflat}, Proposition~\ref{prop:delta-const}, and Proposition~\ref{prop:delta-decrease} there is a non-empty $G$-stable open subset $U$ of $S$ such that $X_U \to U$ is flat of finite presentation and $\delta$-constant. Put $\delta=\delta(X)$. By $\cP(\delta)$, it follows that $X_U$ is topologically $G$-noetherian. By the inductive hypothesis, $X_{S \setminus U}$ is topologically $G$-noetherian. Thus $X$ is topologically $G$-noetherian (Proposition~\ref{prop:noeth-pieces}).
\end{proof}

\begin{proof}[Proof of Corollary~\ref{cor:Ghilbert}]
Suppose that $X$ is topologically $G \times H$ noetherian, where $G$ is an arbitrary group and $H$ is a finite type algebraic group acting algebraically on $X$. Consider the action map $f \colon H \times X \to X$. Let $H$ act on $H \times X$ by $h \cdot (h', x)=(hh', x)$, and let $G$ act on $H \times X$ by $g \cdot (h,x)=(h,gx)$. Then $f$ is $G \times H$ equivariant. Since $H$ is finite type, so is $f$. The theorem therefore implies that $H \times X$ is topologically $G \times H$ noetherian. If $Z_{\bullet} \subset X$ is a descending chain of $G$-stable closed subsets then $H \times Z_{\bullet} \subset H \times X$ is a descending chain of $G \times H$ stable closed subsets, and thus stabilizes. Thus $Z_{\bullet}$ stabilizes, and so $X$ is topologically $G$-noetherian.
\end{proof}

\begin{bibdiv}
\begin{biblist}

\bib{aschenbrenner}{article}{
   author={Aschenbrenner, Matthias},
   author={Hillar, Christopher},
   title={Finite generation of symmetric ideals},
   journal={Trans. Amer. Math. Soc.},
   volume={359},
   date={2007},
   pages={5171--5192},
   note={\arxiv{math/0411514v3}},
}

\bib{cohen1}{article}{
  author={Cohen, D.\ E.},
  title={On the laws of a metabelian variety},
  journal={J. Algebra},
  volume={5},
  date={1967},
  pages={267--273}
}

\bib{cohen2}{article}{
  author={Cohen, D.\ E.},
  title={Closure relations, Buchberger's algorithm, and polynomials in infinitely many variables},
  journal={Computation theory and logic},
  date={1987},
  pages={78--873}
}

\bib{DES}{article}{
   author={Derksen, Harm},
   author={Eggermont, Rob H.},
   author = {Snowden, Andrew},
   title={Topological noetherianity for cubic polynomials},
   journal={Algebra Number Theory},
   volume={11},
   date={2017},
   pages={2197--2212},
note={\arxiv{1701.01849v2}},
}

\bib{draisma}{article}{
	author = {Draisma, Jan},
	title = {Topological noetherianity for polynomial functors},
        date={2017},
        journal={J. Amer. Math. Soc.},
        date={2019},
        pages={691--707},
        volume={32},
        issue={3},
	note = {\arxiv{1705.01419v4}},
}

\bib{draisma-eggermont}{article}{
author={Draisma, Jan},
author={Eggermont, Rob H.},
title={Pl\"ucker varieties and higher secants of Sato's Grassmannian},
journal={J. Reine Angew. Math.},
year={2016},
note={\arxiv{1402.1667v3}},
}

\bib{draisma-kuttler}{article}{
   author={Draisma, Jan},
   author={Kuttler, Jochen},
   title={Bounded-rank tensors are defined in bounded degree},
   journal={Duke Math. J.},
   volume={163},
   date={2014},
   number={1},
   pages={35--63},
note={\arxiv{1103.5336v2}},
}

\bib{eggermont}{article}{
   author={Eggermont, Rob H.},
   title={Finiteness properties of congruence classes of
   infinite-by-infinite matrices},
   journal={Linear Algebra Appl.},
   volume={484},
   date={2015},
   pages={290--303},
note={\arxiv{1411.0526v1}},
%   issn={0024-3795},
%   review={\MR{3385063}},
%   doi={10.1016/j.laa.2015.06.035},
}

\bib{eggsnow}{article}{
   author={Eggermont, Rob H.},
   author={Snowden, Andrew},
   title={Topological noetherianity for algebraic representations of infinite rank classical groups},
   year={2017},
   note={\arxiv{1708.06420v1}},
}

\bib{ess}{article}{
   author={Erman, Daniel},
   author={Sam, Steven V},
   author={Snowden, Andrew},
   title={Generalizations of Stillman's conjecture via twisted commutative algebras},
   journal={Int. Math. Res. Not. IMRN, to appear},
   note={\arxiv{1804.09807v1}},
}

\bib{hillar}{article}{
   author={Hillar, Christopher},
   author={Sullivant, Seth},
   title={Finite Gr\"obner bases in infinite dimensional polynomial rings and applications},
   journal={Adv. Math.},
   volume={221},
   date={2012},
   pages={1--25},
   note={\arxiv{0908.1777v2}},
}

\bib{stacks}{misc}{
label={Stacks},
  author       = {The {Stacks Project Authors}},
  title        = {Stacks Project},
  year         = {2017},
  note = {\url{http://stacks.math.columbia.edu}},
}

\bib{infrank}{article}{
   author={Sam, Steven V},
   author={Snowden, Andrew},
   title={Stability patterns in representation theory},
   journal={Forum Math. Sigma},
   volume={3},
   date={2015},
   pages={108pp},
   note={\arxiv{1302.5859v2}},
}

\end{biblist}
\end{bibdiv}
\end{document}